\newcommand{\lebn}
\numberwithin{equation}{section}
\theoremstyle{plain}
\newtheorem{thm}[equation]{Theorem}
\theoremstyle{definition}
\newtheorem{defn}[equation]{Definition}
\theoremstyle{plain}
\theoremstyle{definition}
\theoremstyle{remark}
\begin{document}

\title[Wolbachia]{A discussion of bisexual populations with Wolbachia infection as an evolution algebra}
\author{Songül ESİN, Müge KANUNİ, Barış ÖZDİNÇ}

\address{Department of Mathematics and Computer Science, İstanbul Kültür University, Ataköy
Kampüsü, Bakırköy 34158, İstanbul, Turkey.}
\email{s.esin@iku.edu.tr}
\address{Department of Mathematics, Düzce University,
Düzce, 81620, Turkey.}
\email{mugekanuni@duzce.edu.tr}
\address{CosmosID, Suite 300, 20030 Century Blvd, Germantown, 20874, MD, U.S.A.}
\email{baris@cosmosid.com}

\keywords{Bisexual population; Wolbachia infection; evolution algebra, idempotent element, absolute nilpotent element. }

\date{\today}
\thanks{The authors would like to thank Semiha Özgül for helpful discussion while conducting this work.}
\begin{abstract}
In this paper, Wolbachia infection in a bisexual and diploid population with a fixed cytoplasmic incompatibility rate $w$ and maternal transmission rate $d$  is studied as an evolution algebra.  As the cytoplasmic incompatibility (CI) of the population causes deaths in the offspring, the evolution algebra of this model is not baric, and is a dibaric algebra if and only if the cytoplasmic incompatibility rate $w$ is 1 and $d=1$. The idempotent elements are given in terms of $d$ and $w$. Moreover, this algebra has no absolute nilpotent elements when CI expression $w \neq 1$.
\end{abstract}
 
\maketitle

\section{Introduction}  \label{intro}
The manipulation of the host production by microbes is studied extensively in evolutionary biology and one particular type is a parasite called Wolbachia \cite{EcoEvo}. It is primarily found in insects and can be transferred to offspring, causing the mortality of the embryo of an infected male and an uninfected female. There are studies in the literature on Wolbachia-infected insects such as the terrestrial isopods \cite{isopod}, the honeybees \cite{WolbachBee};  and the mosquitoes \cite{Mos-Sing}. In Singapore, the release of male Wolbachia-infected mosquitoes reduced the dengue mosquitoes by causing mortality of the uninfected dengue mosquitoes and consequently reduced the dengue disease incidences. Wolbachia-infection has two different effects, vector-competency of dengue transmission is diminished by the strength of Wolbachia-infection; the second one is the appearance of cytoplasmic incompatibility in the population. \emph{Cytoplasmic incompatibility (CI)} is the reproductive incompatibility between males infected with a particular strain of bacteria and females not infected with this strain. 

There are two different mathematical approaches to the discussion of the Wolbachia populations. The first approach considers the infected population as a time-discrete dynamical system (\cite{I}). This article addresses the second approach namely, considering the Wolbachia infected population as an evolution algebra. The evolution algebra of bisexual populations is studied by Ladra and Rozikov in \cite{EABP}.  The Wolbachia-infected populations considered in this paper are bisexual and also diploid. Hence, Wolbachia-infection of bee populations is not considered in this paper as male bees are haploid. Moreover, there are different strains of Wolbachia bacteria apparent in biological systems, hence it might be interesting to study higher dimensional evolution algebras of Wolbachia-infected populations. However, within the scope of this paper, EABP is considered to have the minimum dimension, i.e. it is four-dimensional.      

The theory of evolution algebras dates back to Mendel and a concise approach to the topic is given in the expository article in \cite{history}. The mathematical theory of bisexual evolution algebras (EABP) is already established in \cite{EABP}, hence Wolbachia-infected population will be a particular example to implement the results known, and characterize its algebraic properties, list the similarities and the differences with \cite{EABP}.  
The Wolbachia-infected population considered in this article is a four-dimensional evolution algebra of a bisexual population denoted by $\mathcal{W}$. The basis consists of two types of females $f_1,f_2$ and two types of males $m_1, m_2$. Type 1 is a non-infected individual and type 2 is a Wolbachia-infected individual (denoted by superscript $+$), namely $f_1=XX, f_2=XX^+, m_1=XY, m_2=XY^+ $. 

The evolution algebra of a bisexual population with a Wolbachia infection,  $\mathcal{W}$, shares the same properties as an EABP (Theorem \ref{commutative}). However, the Wolbachia-infected population reproduction causes deaths in the offspring, hence there are some noteworthy differences.  The bisexual evolution algebra (EABP) is not a baric algebra, it is a dibaric algebra \cite[Theorems 5.1 and 6.3]{EABP}. On the other hand, $\mathcal{W}$ is not a baric algebra (Theorem \ref{baric}), and it is a dibaric algebra if and only if $w=1=d$(Theorem \ref{dibaric}).  

The outlay of the paper will be as follows: Section \ref{section:prel} starts with the preliminaries from biology and evolution algebras. Section \ref{section:EABP} mimics the known results on a bisexual evolution algebra in \cite{EABP} to the case of a Wolbachia-infected bisexual population. Bustamante, Mellon and Velasco in \cite{Matrix}, propose a method to determine whether a genetic algebra is an evolution algebra. Hence, using this method, it is shown that the Wolbachia-infected bisexual population is not an evolution algebra in Section \ref{matrix}. Section \ref{section:evolop} studies the idempotent and absolute nilpotent elements of $\mathcal{W}$. Theorem \ref{idempotent} lists the idempotent elements and agrees with the results in \cite[Section 3]{I}. Moreover, Theorem \ref{nilpotent} shows that there are no absolute nilpotent elements when CI rate is not 1. 

\section{Preliminaries}\label{section:prel}
\subsection{Biology}
\begin{defn}\cite{EcoEvo}\label{defn:CI}
 \emph{Cytoplasmic incompatibility (CI)} is the reproductive incompatibility
between males infected with a particular strain of bacteria and females not infected with this strain. 
\end{defn}
Wolbachia is a particular example of a parasite in insects that is transmitted via reproduction. Although there are different strains of Wolbachia-infections in nature, to minimize the dimension of evolution algebra studied in this paper, only one strain of Wolbachia-infection is considered.

The cytoplasmic incompatibility of the population is given as $w$, and $d$ is the probability of the transmission of the Wolbachia infection from a female to its offspring. In biological models, $w$ is also known as the paternal affection rate and $d$ is the maternal transmission rate (eg.\cite{Fine}). If both $w=d=1$, the infected individual (male/female) produces all infected alleles. Mathematically speaking, if $w=0$, then the infected male produces no infected alleles and if $d=0$, then the infected female does not produce any infected gametes. Both $w=0$ or $d=0$ scenarios are not biologically observed, and not interesting. Hence, throughout the text, assume $w,d \in (0,1]$. 

The infected female/male produces gametes that are both infected and uninfected. The zygote from an infected male gamete and an uninfected female gamete is viable, hence there are deaths in the offspring population.   

\begin{table}[h]
\caption{Gamete Crossing in a Wolbachia-infected Population.}
\[
\scalebox{0.8}{
\begin{tabular}{|c|c|c|c|c|}
\hline
\multicolumn{1}{|c|}{{\small Gamete Crossing}}  & $X$ & $X^{+}$ & $Y$ & $Y^{+}$ \\ \hline
\multicolumn{1}{|c|}{$
\begin{array}{c}
\text{ \ } \\ 
\end{array} X \begin{array}{c}
\text{ \ } \\ 
\end{array} $} & uninfected female & (death) no offspring & uninfected male
& (death) no offspring \\ \hline
\multicolumn{1}{|c|}{$ \begin{array}{c}
\text{ \ } \\ 
\end{array} X^{+} \begin{array}{c}
\text{ \ } \\ 
\end{array}  $} & infected female & infected female & infected male & 
infected male\\ \hline
\end{tabular} } 
\]
\end{table}

\subsection{Evolution Algebra} In this subsection, the necessary terminology, and historical remarks are quoted from \cite{history}, which is a brief and concise summary of the history of evolution algebras. 

Interpretation of sexual reproduction laws of inheritance  with algebraic symbols dates back to Mendel. More precise studies by Serebrowski, Kostitzin, Glivenkov, and Etherington (to name a few pioneers of the subject) gave rise to the term genetic algebras. Algebraic properties of special genetic algebras are studied by several authors. 

Etherington also defined baric algebra in 1939 as a special genetic algebra. 
\begin{defn} \label{Defn:baric}
A \textit{character} for an algebra $A$ is a nonzero multiplicative linear form on $A$, that is, a nonzero algebra homomorphism from $A$ to $\mathbb{R}$. Not every algebra admits a character. For example, an algebra with zero multiplication has no character. A pair $(A,\sigma )$ consisting of an algebra $A$ and a character $\sigma $ on $A$ is called a \textit{baric algebra}. 
\end{defn}

In 1970, Holgate introduced the notions of sex differentiation algebra (Definition \ref{Defn:sex}) and dibaric algebra (Definition \ref{Defn:dibaric}). 

\begin{defn} \label{Defn:sex} \cite[Definition 6.1]{EABP}
Let $\mathcal{U}=\left\langle W,M\right\rangle _{\mathbb{R}}$ denote a two-dimensional commutative algebra over 
$\mathbb{R}$ with the multiplication table $$ W^{2}=M^{2}=0,~~~~~\ WM=\frac{1}{2}(W+M). $$
Then $\mathcal{U}$ is called the \textit{sex differentiation algebra.}
\end{defn}
Notice that $\mathcal{U}^2 = span \{zt \ | \ z,t \in \mathcal{U} \} = \left\langle W+M\right\rangle _{\mathbb{R}}$ is an ideal of $\mathcal{U}$. 
\begin{defn} \label{Defn:dibaric} 
 If an $\mathbb{R}$-algebra $A$ admits a homomorphism onto the sex differentiation algebra then $A$ is called a \textit{dibaric algebra.}
\end{defn}
Holgate also proved that if $A$ is a dibaric algebra, then $A^2$ is a baric algebra. 

The evolution algebras were introduced by Tian in his Ph.D. thesis to model the self-reproduction rules of non-Mendelian genetics in 2004.
\begin{defn} \label{TianEvolAlg} 
Let $I$ be an index set and  $E$ be a vector space over a field $\mathbb K$, with a basis $B=\{e_{i} \ \vert \ i\in I \}$ such that $e_{i}e_{j}=0$ whenever $i\neq j$ and $e_ie_i=\sum_{k\in I} \omega _{ki}e_{k}$. Then $E$ is called an \emph{evolution algebra} over $\mathbb K$ and $B$ is a natural basis of $E$.  
 The scalars $\omega_{ki}\in \mathbb K$ are the \emph{structure constants} of $A$\emph{\ relative to} $B$,  the matrix $M_B:= \left(w_{ki}\right)$ is the \emph{structure matrix of} $A$ \emph{relative to} $B$. Every evolution algebra is uniquely determined by its structure matrix. 
\end{defn}

Assume that a population consists of $m$ different genetic types and consider the $m$-tuple $x=(x_{1},\ldots ,x_{m})$. Each component $x_i$ of $x$ denotes the probability that a random individual in the population belongs to the species that is determined by $i^{th}$ genetic type, hence $x_{i}\geq 0$ and $\sum\limits_{i=1}^{m}x_{i}=1$. 
Let $x^{0}=(x_{1}^{0},\ldots ,x_{m}^{0})$ be the
probability distribution of species in the initial generations, and  $P_{ij,k}$ be the probability that individuals in the $i^{th}$ and $j^{th}$ species interbreed to produce an individual $k$. Then the probability distribution 
$x^{\prime }=(x_{1}^{\prime },\ldots ,x_{m}^{\prime })$ of the species in the first generation can be found by the total probability i.e. $$ x_{k}^{\prime }=\sum\limits_{i,j=1}^{m}P_{ij,k}x_{i}^{0}x_{j}^{0}, \quad \quad k=1,\ldots ,m $$
where the cubic matrix $P =(P_{ij,k})_{i,j,k=1}^{m}$ satisfies the
following conditions
$$P_{ij,k}\geq 0,~~\sum\limits_{k=1}^{m}P_{ij,k}=1,~~~~~i,j\in \{1,\ldots ,m\}.$$
Bernstein defined the term \emph{quadratic stochastic operator} (QSO) as a map $V : S^{m-1} \rightarrow S^{m-1}$ where 
$$ S^{m-1}=\left\{ (x_{1},\ldots ,x_{m})\in 
\mathbb{R}^{m}~|~x_{i}\geq 0,~~\sum\limits_{i=1}^{m}x_{i}=1\right\}$$ 
with $x^{0}\longmapsto x^{\prime }$. This map 
$V$ is the evolutionary operator that describes the inheritance process of a free population with 
$m$ different genetic types.
The evolution algebra of a bisexual population (EABP) is described in \cite{EABP}. When the population is bisexual, the basis is partitioned into a set of females with different types indexed by $\{1, 2,...,n \}$, and the set of male types indexed by $\{1, 2,..., \upsilon \}$. The dimension of the population is the sum of the male and female types, 
that is $n+\upsilon$. The population is described by its state vector $(x, y) \in S^{n-1} \times S^{\upsilon -1}$, the product of two unit simplexes in $\mathbb{R}^n$ and $\mathbb{R}^{\upsilon}$ respectively. Vectors $x$ and $y$ are the probability distributions of the females and males over the possible types which satisfy the equations: 
$$ x_i \geq 0, y_j \geq 0,  \mbox{ for all } i \in \{1, 2,...,n \}, j \in \{1, 2,...,\upsilon \}$$
$$ \sum_{i=1}^n x_i =1  \mbox{ and }  \sum_{i=1}^{\upsilon} y_i =1.$$

Let $P_{ik,j}^{(f)}$ and $P_{ik,l}^{(m)}$ be inheritance coefficients defined as the probability that female offspring is type $j$ and, respectively, that a male offspring is of type $l$ when the parental pair
is $ik$ ($i,j=1,\ldots,n$; and $k,l=1,\ldots,\upsilon $). We have
\begin{equation}\label{sumprobcoef=1}
P_{ik,j}^{(f)}\geq 0,~~~~\sum\limits_{j=1}^{n}P_{ik,j}^{(f)}=1;\text{ \ \ \
\ \ }P_{ik,l}^{(m)}\geq 0,~~~~\sum\limits_{l=1}^{\upsilon }P_{ik,l}^{(m)}=1
\end{equation}

 \section{Evolution algebra model of a Wolbachia infected population}\label{section:EABP} 
The study of the Wolbachia-infected population as an evolution algebra is a particular example of the bisexual population. Wolbachia-infected populations considered in this paper do not form a model for an evolution algebra in the sense of Definition \ref{TianEvolAlg} as shown in Theorem \ref{Notevalg}. However, the evolution algebra of a bisexual population \cite{EABP} is a suitable model.  Most of this work is an interpretation of this problem with respect to the paper of Ladra and Rozikov \cite{EABP}.  

Consider a population with four types of individuals, males without Wolbachia infection: $XY$, males with Wolbachia infection: $XY^{+}$, females without Wolbachia infection: $XX$ and females with Wolbachia infection: $XX^{+}$ where the basis elements of the population are denoted by $f_1=XX,~f_2=XX^{+},~m_1=XY,~m_2=XY^{+}$.  

Define $\mathcal{W}$ as the vector space generated by four basis elements $\mathcal{B}=\{f_1, f_2, m_1, m_2 \}$ with the maternal transmission rate is given as $d$ and the cytoplasmic incompatibility of the population \emph{CI} rate is given as $w$. Stated differently, the type 2 (Wolbachia-infected) female individual ($f_2$) produces infected gamete with probability $d$ and type 2 (Wolbachia infected) male individual ($m_2$) produces infected gamete with probability $w$ ($w$ is also called the paternal affection rate in literature \cite{Fine}). 
An individual infected female produces an  infected gamete $X^+$ with probability $d$, and an uninfected gamete $X$ with probability $1-d$; whereas an individual infected male produces infected gametes $X^+$ or $Y^+$ with probability $w/2$, and uninfected gametes $X$ or $Y$ with probability $(1-w)/2$ each. Hence, the zygotes formed contribute to the offspring population. The Punnett squares of gametes of the basis elements are given in Table \ref{tab:punnett}.

\begin{table}[h]
\caption{Punnett squares of gametes from the mating of basis elements \\ \small
    (a) $XX$ vs. $XY$, (b) $XX$ vs. $XY^+$, (c) $XX^+$ vs. $XY$ (d) $XX^+$ vs. $XY^+$.}
\label{tab:punnett}
\vspace{.5cm}
    \begin{subtable}[]
    \centering
    \hspace{.4cm}
    \scalebox{0.95}{
        $ \begin{tabular}{c|c|c}
& $X$ & $Y$ \\ \hline\hline
&   &   \\ 
$X$ & $\frac{1}{2}XX$ & $\frac{1}{2}XY$
        \end{tabular} $ }
    \end{subtable}
   \hfill
    \begin{subtable}[] 
    \centering 
    \hspace{0.7cm}
    \scalebox{0.9}{ $\begin{tabular}{c|c|c|c|c}
& $X$ & $X^{+}$ & $Y$ & $Y^{+}$ \\ \hline\hline
&   &   &   &   \\ 
$X$ & $\frac{1}{2}(1-w)XX$ & $-$ & $\frac{1}{2}(1-w)XY$ & $-$%
        \end{tabular}$ 
        }
    \end{subtable}
\bigbreak
\vspace{.5cm}
    \begin{subtable}[]
       \centering
         \hspace{.4cm}
    \scalebox{0.7}{
        $\begin{tabular}{c|c|c}
& $X$ & $Y$ \\ \hline\hline
&   &   \\ 
$X$ & $\frac{1}{2}(1-d)XX$ & $\frac{1}{2}(1-d)XY$ \\ 
&   &   \\ \hline
&   &   \\ 
$X^{+}$ & $\frac{1}{2}dXX^{+}$ & $\frac{1}{2}dXY^{+}$%
        \end{tabular}$}
    \end{subtable}
    \hfill
    \begin{subtable}[] 
    \centering
     \hspace{0.7cm}
        \scalebox{0.6}{ $\begin{tabular}{c|c|c|c|c}
& $X$ & $X^{+}$ & $Y$ & $Y^{+}$ \\ \hline\hline
&   &   &   &   \\ 
$X$ & $\frac{1}{2}(1-d)(1-w)XX$ & $-$ & $\frac{1}{2}(1-d)(1-w)XY$ & $-$ \\ 
&   &   &   &   \\ \hline
&   &   &   &   \\ 
$X^{+}$ & $\frac{1}{2}d(1-w)XX^{+}$ & $\frac{1}{2}dwXX^{+}$ & $\frac{1}{2}d(1-w)XY^{+}$ & $\frac{1}{2}dwXY^{+}$
        \end{tabular}$ 
         }
    \end{subtable} 
    \end{table}

Summing up the information from the biological model in Table \ref{tab:punnett}, the multiplication table of $\mathcal{W}$ is achieved in Table \ref{tab:zygote}.
\begin{table}[h]
\caption{The multiplication table of $\mathcal{W}$.} 
\label{tab:zygote} 
\[
\begin{tabular}{c|c|c}
& $XY$ & $XY^{+}$ \\ \hline\hline
&   &   \\ 
$XX$ & $\frac{1}{2}XX+\frac{1}{2}XY$ & $\frac{1}{2}(1-w)XX+\frac{1}{2}(1-w)XY
$ \\ 
&   &   \\ \hline
&   &   \\ 
$XX^{+}$ & $
\begin{array}{c}
\frac{1}{2}(1-d)XX+\frac{1}{2}dXX^{+} \\ 
\text{ \ } \\ 
+\frac{1}{2}(1-d)XY+\frac{1}{2}dXY^{+}
\end{array}
$ & $
\begin{array}{c}
\frac{1}{2}(1-d)(1-w)XX+\frac{1}{2}dXX^{+} \\ 
\text{ \ } \\ 
+\frac{1}{2}(1-d)(1-w)XY+\frac{1}{2}dXY^{+}
\end{array}
$
\end{tabular} 
\]
\end{table}

In $\mathcal{W}$, whether crossed with a type 1 or type 2 male, a type 1 female will never produce a type 2 male or female. When a type 1 female and type 1 male mate the offspring will only be a type 1. Type 1 female crossing with type 2 male will produce type 1 female (male) with probability $1-w$ (See Table \ref{tab:zygote}).

Also, let $P_{ik,j}^{(f)}$ and $P_{ik,j}^{(m)}$ be inheritance coefficients defined as the probability that an offspring ( $f$ for female, $m$ for male) is type $j$, ($j=1$ for non-infected and $j=2$ for infected) when the parental pair is $ik$ ($i$ denotes the mother's and $k$ denotes the father's types respectively). 
Table \ref{tab:zygote} states the frequency of the individuals within the whole population, but to mimic the model in \cite{EABP}, multiply each coefficient by 2 in Table \ref{tab:zygote} to get the inheritance coefficients of females as the frequency of the female basis element to the total female population (Table \ref{tab:coef}). 

\begin{table}[h]
\caption{The inheritance coefficients $ P^{(f)}_{ik,j}$ of $\mathcal{W}$.} 
\label{tab:coef} 
\[
\begin{tabular}{|c|c|c|c|}
\hline
&  &  &  \\ 
$P_{11,1}^{(f)}=1$ & $P_{12,1}^{(f)}=1-w$ & $P_{21,1}^{(f)}=1-d$ & $%
P_{22,1}^{(f)}=(1-d)(1-w)$ \\ 
&   &   &   \\ 
\hline 
&  &  &  \\ 
$P_{11,2}^{(f)}=0$ & $P_{12,2}^{(f)}=0$ & $P_{21,2}^{(f)}=d$ & $P_{22,2}^{(f)}=d$ \\ 
&  &  &  \\ \hline
\end{tabular}%
\]
\end{table}

Similarly, the inheritance coefficients of the male individuals are the same, so for all $i,j,k \in \{ 1,2\}$, $P_{ik,j}^{(f)} = P_{ik,j}^{(m)}$. 
Notice that as there are deaths in the offspring of the mating of $f_1$ with $m_2$ and $f_2$ with $m_2$, the sum of the inheritance coefficients in the second and fourth column respectively do not add up to 1.  
\bigskip

Now, the multiplication is defined on the basis $\mathcal{B}$ as  
\begin{eqnarray*}
f_i m_{k}&=&m_{k}f_{i}=\frac{1}{2}\left(
P_{ik,1}^{(f)}f_{1}+P_{ik,2}^{(f)}f_{2}+P_{ik,1}^{(m)}m_{1}+P_{ik,2}^{(m)}m_{2}\right),
\\
f_{i}f_{j} &=&0,\text{ \ }i,j=1,2;\text{ \ \ \ \ \ }%
m_{k}m_{l}=0,\text{ \ }k,l=1,2.
\end{eqnarray*}

Hence, 
\begin{eqnarray} \label{3.1}
f_1m_1 &=&\dfrac{1}{2}\left(
P_{11,1}^{(f)}f_1+P_{11,2}^{(f)}f_2+P_{11,1}^{(m)}m_1+P_{11,2}^{(m)}m_2\right) \nonumber
\\
&=&\dfrac{1}{2}\left( f_1+m_1\right) \nonumber  \\
f_1m_2 &=&\dfrac{1}{2}\left(
P_{12,1}^{(f)}f_1+P_{12,2}^{(f)}f_2+P_{12,1}^{(m)}m_1+P_{12,2}^{(m)}m_2\right) \nonumber 
\\
&=&\dfrac{1}{2}(1-w)\left( f_1+m_1\right) \nonumber  \\
f_2m_1 &=&\dfrac{1}{2}\left(
P_{21,1}^{(f)}f_1+P_{21,2}^{(f)}f_2+P_{21,1}^{(m)}m_1+P_{21,2}^{(m)}m_2\right) 
\\
&=&\dfrac{1}{2}\left[ (1-d)\left( f_1+m_1\right) +d\left(
f_2+m_2\right) \right]  \nonumber \\
f_2m_2 &=&\dfrac{1}{2}\left(
P_{22,1}^{(f)}f_1+P_{22,2}^{(f)}f_2+P_{22,1}^{(m)}m_1+P_{22,2}^{(m)}m_2\right) \nonumber 
\\
&=&\dfrac{1}{2}\left[ (1-d)(1-w)\left( f_1+m_1\right)
+d\left( f_2+m_2\right) \right] \nonumber 
\end{eqnarray}
\begin{defn}
    The algebra $\mathcal{W}$ generated by $\mathcal{B}=\{f_1, f_2, m_1, m_2 \}$ for a given \emph{CI} (paternal affection rate) $w$ and maternal transmission rate $d$, with the multiplication in Equation (\ref{3.1}) is called \emph{the evolution algebra of the Wolbachia-infected bisexual population}.
\end{defn}
The population is described by its state vector $(x, y) \in S^{1} \times S^{1}$, the product of two unit simplexes in $\mathbb{R}^2$. Vectors $x=(x_1,x_2)$ and $y=(y_1,y_2)$ are the probability distributions of the females and males over the possible types which satisfy the equations: 
$$ x_i \geq 0, y_j \geq 0,  \mbox{ for all } i,j \in \{1, 2 \}  \mbox{ and }
 \sum_{i=1}^2 x_i =1  = \sum_{i=1}^2 y_i .$$
In terms of inheritance coefficients, 
$$
P_{ik,j}^{(f)}\geq 0, \ P_{ik,l}^{(m)}\geq 0, ~~~~\sum\limits_{j=1}^{2}P_{i1,j}^{(f)}=1= \sum\limits_{j=1}^{2}P_{i1,j}^{(m)}; 
$$
However,
\begin{equation}\label{sumprobcoefnot1}
\sum\limits_{j=1}^{2}P_{12,j}^{(f)}= 1-w =\sum\limits_{j=1}^{2}P_{12,j}^{(m)} \text{ and } \sum\limits_{j=1}^{2}P_{22,j}^{(f)}= 1-w + dw =\sum\limits_{j=1}^{2}P_{22,j}^{(m)} 
\end{equation}

\bigskip

\bigskip 

Notice that, Wolbachia-infected bisexual population $\mathcal{W}$ differs from EABP of \cite{EABP} as listed:
\begin{itemize}
    \item  The first observation is that, unlike in an EABP, 
    in $\mathcal{W}$ when $k=2$, as Equation (\ref{sumprobcoefnot1}) states
\begin{equation*}
 \sum\limits_{j=1}^{2}P_{12,j}^{(f)}= 1-w \neq 1  \quad  \mbox{ and } \quad
 \sum\limits_{j=1}^{2}P_{22,j}^{(f)}= 1-w +dw \neq 1 
\end{equation*}
    \item  In \cite[Remark 3.1]{EABP}, it states that
if a population is free then the male and female types are identical and, in particular number of elements in the female basis is equal to the number of elements in the male basis  (i.e. $n=\upsilon=2$), the inheritance coefficients are the same for male
and female offspring, that is
\begin{equation}\label{inhercoefeq}
P_{ik,j}=P_{ik,j}^{(f)}=P_{ik,j}^{(m)}.
\end{equation}
Although Equation (\ref{inhercoefeq}) is satisfied in $\mathcal{W}$, male and female types are not identical. For instance,   whether crossed with a type 1 or type 2 male, a type 1 female will never produce a type 2 male or female. On the other hand, a type 1 male crossed with type 2 female will produce an offspring of type 2 male or female with probability $d$ (See Table \ref{tab:zygote}). Hence, male and female types are not identical and $\mathcal{W}$ is not a free population.
 
    \item Moreover, in $\mathcal{W}$, the symmetry condition $P_{ik,j}=P_{ki,j}$ is not necessarily satisfied, as $P_{12,1} = 1 -w \neq 1 - d =  P_{21,1}$ and $P_{21,2}= d  \neq 0 = P_{12,2}$.
\end{itemize}

\section{Wolbachia-infected populations do not form a model for an evolution algebra} \label{matrix} 

Consider  $\mathcal{W}$ with $\mathcal{B}=\{f_1, f_2, m_1, m_2 \}$ for a given \emph{CI} rate of $w$ and maternal transmission rate $d$. 

In a recent work of Bustamante, Mellon, and Velasco, the authors analyze when a genetic algebra is an evolution algebra by determining whether the structure matrices of the genetic algebra are simultaneously diagonalizable (\cite[Theorems 5,6]{Matrix}). Following the same notation, define  $\pi_k : \mathcal{W} \rightarrow \mathbb{R}$ as $\pi_k(f_if_j)= 0 = \pi_k(m_im_j)$, $\pi_k(f_im_j)= \frac{1}{2} P_{ij,k}$ for $k \in \{1,2\}$. Equation (\ref{inhercoefeq}), that is $P^{(f)}_{ij,k} = P_{ij,k} = P^{(m)}_{ij,k}$; reduces the four structural matrices to two different matrices. 

By using Table \ref{tab:coef}, two distinct structural matrices are derived:  
\smallskip

$$M_{1}(\mathcal{B})=\left[ 
\begin{array}{cccc}
\pi _{1}(f_{1}f_{1}) & \pi _{1}(f_{1}f_{2}) & \pi _{1}(f_{1}m_{1}) & \pi
_{1}(f_{1}m_{2}) \\ 
\pi _{1}(f_{2}f_{1}) & \pi _{1}(f_{2}f_{2}) & \pi _{1}(f_{2}m_{1}) & \pi
_{1}(f_{2}m_{2}) \\ 
\pi _{1}(m_{1}f_{1}) & \pi _{1}(m_{1}f_{2}) & \pi _{1}(m_{1}m_{1}) & \pi
_{1}(m_{1}m_{2}) \\ 
\pi _{1}(m_{2}f_{1}) & \pi _{1}(m_{2}f_{2}) & \pi _{1}(m_{2}m_{1}) & \pi
_{1}(m_{2}m_{2})
\end{array}
\right] $$

$$ =\left[ 
\begin{array}{cccc}
0 & 0 & \frac{1}{2} & \frac{1-w}{2} \\ 
&  &  & \text{ \ } \\ 
0 & 0 & \frac{1-d}{2} & \frac{(1-d)(1-w)}{2} \\ 
&  &  & \text{ \ } \\ 
\frac{1}{2} & \frac{1-w}{2} & 0 & 0 \\ 
&  &  & \text{ \ } \\ 
\frac{1-d}{2} & \frac{(1-d)(1-w)}{2} & 0 & 0%
\end{array}%
\right] $$ 

and 
$$M_{2}(\mathcal{B})=\left[ 
\begin{array}{cccc}
\pi _{2}(f_{1}f_{1}) & \pi _{2}(f_{1}f_{2}) & \pi _{2}(f_{1}m_{1}) & \pi
_{2}(f_{1}m_{2}) \\ 
\pi _{2}(f_{2}f_{1}) & \pi _{2}(f_{2}f_{2}) & \pi _{2}(f_{2}m_{1}) & \pi
_{2}(f_{2}m_{2}) \\ 
\pi _{2}(m_{1}f_{1}) & \pi _{2}(m_{1}f_{2}) & \pi _{2}(m_{1}m_{1}) & \pi
_{2}(m_{1}m_{2}) \\ 
\pi _{2}(m_{2}f_{1}) & \pi _{2}(m_{2}f_{2}) & \pi _{2}(m_{2}m_{1}) & \pi
_{2}(m_{2}m_{2})%
\end{array}%
\right] =\left[ 
\begin{array}{cccc}
0 & 0 & 0 & 0 \\ 
0 & 0 & \frac{d}{2} & \frac{d}{2} \\ 
0 & 0 & 0 & 0 \\ 
\frac{d}{2} & \frac{d}{2} & 0 & 0%
\end{array}%
\right] .$$

Note that $M_2(\mathcal{B})$ is not a symmetric matrix. 

\begin{thm} \label{Notevalg}
    $\mathcal{W}$ is not an evolution algebra in the sense of Definition \ref{TianEvolAlg}. 
\end{thm}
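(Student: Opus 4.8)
The plan is to route the argument entirely through the Bustamante--Mellon--Velasco criterion \cite[Theorems 5,6]{Matrix}, which was recalled above for exactly this purpose: a commutative genetic algebra is an evolution algebra precisely when its structure matrices can be simultaneously diagonalized by a single change of natural basis, i.e. when there is an invertible $P$ with $P^{\top}M_1(\mathcal{B})P$ and $P^{\top}M_2(\mathcal{B})P$ both diagonal. Thus proving the theorem amounts to producing one obstruction to this simultaneous congruence-diagonalization, and the pair $M_1(\mathcal{B}),M_2(\mathcal{B})$ already computed from Table \ref{tab:coef} is exactly what I would feed into the criterion.

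The decisive elementary fact I would isolate first is that congruence preserves symmetry: for any square matrix $N$ and invertible $P$ one has $(P^{\top}NP)^{\top}=P^{\top}N^{\top}P$, so $P^{\top}NP$ is symmetric if and only if $N$ is. Since every diagonal matrix is symmetric, a matrix congruent to a diagonal one must itself be symmetric. Phrased intrinsically through Definition \ref{TianEvolAlg}: were $\mathcal{W}$ an evolution algebra it would admit a natural basis $\{e_1,\dots,e_4\}$ with $e_ie_j=0$ for $i\neq j$, relative to which every structure matrix is diagonal; passing back to $\mathcal{B}$ acts on this collection by a congruence together with an invertible linear recombination of the matrices, an operation that cannot turn a symmetric family into a non-symmetric one. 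Hence symmetry of each $M_k(\mathcal{B})$ is a necessary condition for $\mathcal{W}$ to be an evolution algebra.

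It then remains only to read off the obstruction. From the displayed form of $M_2(\mathcal{B})$ its $(f_2,m_1)$ entry is $\tfrac{d}{2}$ while its $(m_1,f_2)$ entry is $0$; as $d\in(0,1]$ these differ, so $M_2(\mathcal{B})\neq M_2(\mathcal{B})^{\top}$. By the previous paragraph $M_2(\mathcal{B})$ cannot be diagonalized by any congruence, a fortiori not simultaneously with $M_1(\mathcal{B})$, so the criterion of \cite{Matrix} is not met and $\mathcal{W}$ is not an evolution algebra in the sense of Definition \ref{TianEvolAlg}.

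The step I expect to carry the real weight is not any computation but the conceptual one in the middle: making precise that the notion of ``simultaneously diagonalizable'' relevant to \cite{Matrix} is diagonalization by congruence induced by a change of natural basis, and therefore that the lack of symmetry of a single structure matrix is a genuine, basis-independent obstruction rather than an artifact of having chosen $\mathcal{B}$. Once that is granted, the asymmetry of $M_2(\mathcal{B})$---which is the matrix shadow of the failure of the inheritance symmetry $P_{ik,j}=P_{ki,j}$ already noted after Equation (\ref{sumprobcoefnot1})---finishes the proof with no further work.
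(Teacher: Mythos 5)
Your framework is the right one in principle---a change of natural basis acts on the family of structure matrices by congruence $M_k\mapsto P^{\top}M_kP$ followed by an invertible recombination, and congruence preserves symmetry---but the obstruction you extract from it is illusory, so the proof collapses at its final step. For a \emph{commutative} algebra the structure matrix $M_k(\mathcal{B})=(\pi_k(e_ie_j))_{i,j}$ is automatically symmetric with respect to \emph{every} basis, simply because $e_ie_j=e_je_i$ as elements of the algebra and $\pi_k$ is a function on the algebra. In $\mathcal{W}$ one has $m_1f_2=f_2m_1=\frac{1}{2}\left[(1-d)(f_1+m_1)+d(f_2+m_2)\right]$, so $\pi_2(m_1f_2)=\pi_2(f_2m_1)=\frac{d}{2}$, not $0$: the $(m_1,f_2)$ entry you read off is a transcription slip in the displayed matrix (the lower-left blocks were filled in with $P_{ij,k}$ indexed row-then-column rather than mother-then-father), and the correctly computed $M_1(\mathcal{B})$ and $M_2(\mathcal{B})$ are both symmetric. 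Your necessary condition is therefore satisfied and yields no contradiction. Indeed, if your argument were valid it would show that any commutative algebra with a non-symmetric structure matrix fails to be an evolution algebra---but that class of algebras is empty, so the symmetry test can never detect anything.

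The paper takes a different route: it asserts that $M_1(\mathcal{B})$ and $M_2(\mathcal{B})$ are diagonalizable but do not commute, hence are not simultaneously diagonalizable by \emph{similarity} (Horn--Johnson), and then invokes \cite{Matrix}. A correct proof along either line has to engage with the actual content of \cite[Theorems 5, 6]{Matrix} for these (singular, symmetric) matrices. Writing the corrected matrices in block form $M_1=\left(\begin{smallmatrix}0 & A\\ A^{\top} & 0\end{smallmatrix}\right)$ and $M_2=\left(\begin{smallmatrix}0 & C\\ C^{\top} & 0\end{smallmatrix}\right)$ with $A=\frac{1}{2}\left(\begin{smallmatrix}1 & 1-w\\ 1-d & (1-d)(1-w)\end{smallmatrix}\right)$ and $C=\frac{d}{2}\left(\begin{smallmatrix}0 & 0\\ 1 & 1\end{smallmatrix}\right)$, one finds that $AC^{\top}$ has $(1,2)$ entry $\frac{d(2-w)}{4}\neq 0$ and $(2,1)$ entry $0$, so $M_1M_2\neq M_2M_1$ persists after the correction; but converting that non-commutation (or any other computable invariant) into failure of simultaneous diagonalization \emph{by congruence} requires the precise hypotheses of the cited criterion, which neither your symmetry argument nor a bare appeal to ``congruence preserves symmetry'' supplies. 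That bridging step is the genuine missing idea in your proposal.
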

\begin{proof}
Recall the fact from \cite[Theorem 1.3.12]{MatrixAnalysis}: Assume $A,B$ are $n \times n$ diagonalizable matrices. Then $A,B$ are simultaneously diagonalizable if and only if $A,B$ commute. 

Now, $M_1(\mathcal{B})$,  $M_2(\mathcal{B})$ are both diagonalizable with the diagonal matrices $D_1$ and $D_2$:
\begin{eqnarray*}
D_1= \left[ 
\begin{array}{cccc}
\frac{(1-d)(1-w)+1}{2} & 0 & 0 & 0 \\ 
&  &  & \text{ \ } \\ 
0 & \frac{(1-d)(1-w)-1}{2}  & 0 & 0 \\
&  &  & \text{ \ } \\ 
0 & 0 & 0 & 0 \\ 
&  &  & \text{ \ } \\ 
0 & 0 & 0 & 0 \end{array} \right] 
\quad \quad 
D_2 = \left[ 
\begin{array}{cccc}
0 & 0 & 0 & 0 \\
&  &  & \text{ \ } \\ 
0 & 0 & 0 & 0 \\ 
&  &  & \text{ \ } \\ 
0 & 0 & -\frac{d}{2} & 0 \\ 
&  &  & \text{ \ } \\ 
0 & 0 & 0 & \frac{d}{2} \end{array} \right]
\end{eqnarray*}
respectively. However,    
$$M_{1}(\mathcal{B})M_{2}(\mathcal{B})= \left[ 
\begin{array}{cccc}
-\frac{d(w-1)}{4} & -\frac{d(w-1)}{4} & 0 & 0 \\
&  &  & \text{ \ } \\
\frac{d(d-1)(w-1)}{4} & \frac{d(d-1)(w-1)}{4} & 0
& 0 \\ 
&  &  & \text{ \ } \\
0 & 0 & -\frac{d(w-1)}{4} & -\frac{d(w-1)}{4} \\
&  &  & \text{ \ } \\
0 & 0 & \frac{d(d-1)(w-1)}{4} & \frac{d(d-1)(w-1)}{4}%
\end{array}
\right]$$

$$M_{2}(\mathcal{B})M_{1}(\mathcal{B}) = \left[ 
\begin{array}{cccc}
0 & 0 & 0 & 0 \\ 
&  &  & \text{ \ } \\
-\frac{d(d-2)}{4} & \frac{d(d-2)(w-1)}{4} & 0 & 0 \\ 
&  &  & \text{ \ } \\
0 & 0 & 0 & 0 \\
&  &  & \text{ \ } \\
0 & 0 & -\frac{d(d-2)}{4} & \frac{d(d-2)(w-1)}{4}
\end{array}
\right]$$
That is, $M_1(\mathcal{B}), M_2(\mathcal{B})$ do not commute. Hence, by \cite[Theorem 1.3.12]{MatrixAnalysis} are not simultaneously diagonalizable. Therefore, $\mathcal{W}$ is not an evolution algebra by \cite{Matrix}. 
\end{proof}

As stated in \cite[Theorem 4.1]{EABP} for an evolution algebra of a bisexual population, the following result is valid for an evolution algebra of a Wolbachia-infected bisexual population $\mathcal{W}$. Recall the definition of flexible and power-associative algebra.
\begin{defn}
 An algebra $A$ is called {\em flexible} if $z(tz)=(zt)z$ for any $z,t \in A$. An algebra $A$ is {\em power-associative} if $(zz)(zz)=((zz)z)z=(z(zz))z$ for every $z$ of $A$.
\end{defn}

\begin{thm} \label{commutative} Let $\mathcal{W}$ be the evolution algebra of a bisexual population with a Wolbachia infection.
\begin{itemize}
\item[(1)] $\mathcal{W}$ is not necessarily associative.

\item[(2)] $\mathcal{W}$ is commutative and flexible.

\item[(3)] $\mathcal{W}$ is not necessarily power-associative.
\end{itemize}
\end{thm}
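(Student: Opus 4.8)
For part~(2), I would first verify commutativity directly on the basis $\mathcal{B}$. All products $f_if_j$ and $m_km_l$ vanish, and Equation~(\ref{3.1}) defines $f_im_k=m_kf_i$, so $xy=yx$ holds on every pair of basis vectors and hence, by bilinearity, on all of $\mathcal{W}$. Flexibility then needs no computation with the structure constants: in any commutative algebra one has $z(tz)=z(zt)=(zt)z$, where the first equality uses $tz=zt$ and the second applies commutativity to the product $zt$ against $z$. I would therefore record flexibility as a purely formal consequence of commutativity.

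For part~(1), I would exhibit a single triple of basis vectors on which associativity fails for every admissible $w,d$. Using $m_1m_1=0$ and $f_1m_1=\tfrac12(f_1+m_1)$, one gets $f_1(m_1m_1)=f_1\cdot 0=0$, whereas $(f_1m_1)m_1=\tfrac12(f_1+m_1)m_1=\tfrac14(f_1+m_1)\neq 0$. The two associations disagree, so $\mathcal{W}$ is not associative, regardless of the parameter values.

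For part~(3), I would take the test element $z=f_1+m_2$ and exploit the idempotent $u:=f_1+m_1$, which satisfies $u^2=u$ since $f_1m_1=\tfrac12 u$. A short computation gives $z^2=2f_1m_2=(1-w)u$, hence $z^2z^2=(1-w)^2u$; on the other hand $uz=\tfrac12(2-w)u$, so $z^2z=(1-w)\,uz=\tfrac12(1-w)(2-w)u$ and thus $(z^2z)z=\tfrac14(1-w)(2-w)^2u$. Equality of $(zz)(zz)$ with $((zz)z)z$ would then force $(1-w)^2=\tfrac14(1-w)(2-w)^2$; cancelling $1-w$ for $w\neq 1$ reduces this to $w^2=0$. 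Hence for every $w\in(0,1)$ power-associativity fails, and $\mathcal{W}$ is not power-associative.

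I expect the only genuine obstacle to be the choice of test element in part~(3): every single basis vector squares to zero (so it is trivially power-associative) and the natural idempotent $u=f_1+m_1$ also satisfies the identity trivially, so one must select an element such as $f_1+m_2$ whose square is a nonzero multiple of an idempotent yet whose fourth-power associations still separate. Once that element is fixed, all products collapse to scalar multiples of the single idempotent $u$ and the remaining arithmetic is immediate.
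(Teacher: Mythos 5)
Your proof is correct and follows essentially the same route as the paper: commutativity on the basis plus the formal deduction of flexibility, a counterexample to associativity built from a product with a zero factor (you use the triple $(f_1,m_1,m_1)$ where the paper uses $(f_1,m_1,m_2)$, and your choice works for all $w$), and the identical test element $z=f_1+m_2$ for power-associativity, with the same resulting comparison $(1-w)^2$ versus $\tfrac14(1-w)(2-w)^2$. Your explicit restriction to $w\in(0,1)$ in part (3) is in fact slightly more careful than the paper, since at $w=1$ both associations vanish and that element shows nothing.
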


\begin{proof}
(1) Take $f_1,m_1$ with $P_{11,1}^{(f)}=1\neq 0$ and
take $m_2$ with $P_{12,1}^{(f)}=1-p\neq 0$ then 
\begin{equation*}
\left( f_1m_1\right) m_2=\dfrac{1}{2}\left(
f_1+m_1\right) m_2=\dfrac{1}{2}%
f_1m_2=\dfrac{1}{4}(1-w)\left(
f_1+m_1\right)\neq 0 
\end{equation*} 

But $f_1(m_1m_2)=0,$ i.e. $\left(
f_1m_1\right) m_2\neq
f_1(m_1m_2).$
\begin{itemize}
    
\item[(2)] 
It is clear that for any $z,t\in \mathbb{R}^{2\times 2}$ we have
\begin{eqnarray*}
z
&=&(x,y)=x_{1}f_1+x_{2}f_2+y_{1}m_1+y_{2}m_2,
\\
t
&=&(u,v)=u_{1}f_1+u_{2}f_2+v_{1}m_1+v_{2}m_2.
\end{eqnarray*}
As $f_im_k = m_kf_i$ $(k,l=1,2)$, $zt=tz.$ \\
Since $\mathcal{W}$ is commutative, it follows that $(zt)z=z(tz)$ is true. Hence it is flexible.
\end{itemize}
\begin{itemize}
\item[(3)] To show that $\mathcal{W}$ is not power-associative,
we will construct an example of $z$ such that $(zz)(zz)\neq ((zz)z)z.$ \\
Consider $z=f_1+m_2.$ Then 
\begin{equation*}
z^{2}=2f_1m_2=(1-w)\left( f_1+m_1\right) 
\end{equation*}
and 
\begin{equation*}
z^{2}z^{2}=2(1-w)^2f_1m_1=(1-w)^2\left(
f_1+m_1\right) .
\end{equation*}
On the other hand, 
\begin{eqnarray*}
z^{2}z &=&(1-w)\left( f_1m_1+f_1m_2\right)\\
&=&\dfrac{1}{2}(1-w)\left[ \left( f_1+m_1\right) +(1-w)^2\left(
f_1+m_1\right) \right]  \\
&=&\dfrac{1}{2}(1-w)(2-w)\left( f_1+m_1\right) 
\end{eqnarray*}
and 
\begin{eqnarray*}
\left( z^{2}z\right) z &=&\dfrac{1}{2}(1-w)(2-w)\left(
f_1m_1+f_1m_2\right)  \\
&=&\dfrac{1}{4}(1-w)(2-w)^{2}\left( f_1+m_1\right) .
\end{eqnarray*}
This shows that $(zz)(zz)\neq ((zz)z)z.$
\end{itemize}
\end{proof}
\subsection{$\mathcal{W}$ is not a baric algebra} 
Recall that an $\mathbb{R}$-algebra $A$ is a baric algebra if it admits a nonzero algebra map $\sigma : A \rightarrow \mathbb{R}$.
\begin{thm}\label{baric}
$\mathcal{W}$ is not a baric algebra.
\end{thm}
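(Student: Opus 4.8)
The plan is to argue by contradiction, exploiting the fact that every basis element of $\mathcal{W}$ squares to zero. Suppose $\mathcal{W}$ were baric, so that there exists a character $\sigma : \mathcal{W} \to \mathbb{R}$, i.e. a \emph{nonzero} linear form satisfying $\sigma(zt)=\sigma(z)\sigma(t)$ for all $z,t\in\mathcal{W}$. A linear form is determined by its values on the basis $\mathcal{B}=\{f_1,f_2,m_1,m_2\}$, so I would set $\sigma(f_1)=\alpha$, $\sigma(f_2)=\beta$, $\sigma(m_1)=\gamma$, $\sigma(m_2)=\varepsilon$ and aim to show these four scalars must all vanish, contradicting $\sigma\neq 0$.

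The central observation is that the multiplication defined in Equation~(\ref{3.1}) satisfies $f_if_j=0$ for $i,j=1,2$ and $m_km_l=0$ for $k,l=1,2$; in particular $f_1^2=f_2^2=m_1^2=m_2^2=0$. First I would apply multiplicativity to these squares: from $f_1^2=0$ we get
\begin{equation*}
\alpha^2=\sigma(f_1)^2=\sigma(f_1^2)=\sigma(0)=0,
\end{equation*}
so $\alpha=0$, and the identical computation with $f_2$, $m_1$, $m_2$ forces $\beta=\gamma=\varepsilon=0$ as well. Hence $\sigma$ vanishes on all of $\mathcal{B}$.

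Finally, since $\sigma$ is linear and vanishes on a spanning set, $\sigma$ is the zero map, contradicting the requirement in Definition~\ref{Defn:baric} that a character be nonzero. I would conclude that $\mathcal{W}$ admits no character and therefore is not a baric algebra.

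I do not expect any genuine obstacle here: the argument is purely structural and uses none of the specific values of $w$ and $d$, only that the female--female and male--male products all vanish. This is exactly the same mechanism that prevents the EABP of \cite{EABP} from being baric, and the sole point requiring care is the (routine) remark that a linear functional annihilating a basis is identically zero, which is what converts the pointwise vanishing on $\mathcal{B}$ into the contradiction $\sigma\equiv 0$.
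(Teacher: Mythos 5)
Your argument is correct and is essentially identical to the paper's own proof: both use multiplicativity on the zero products $f_i^2=0$ and $m_j^2=0$ to force every basis value of the character to vanish, whence $\sigma\equiv 0$, contradicting the nonzero requirement in Definition~\ref{Defn:baric}. No issues.
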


\begin{proof}
Consider a character $\sigma : \mathcal{W} \rightarrow \mathbb{R} $ such that $\sigma (f_i)=a_{i},\sigma
(m_j)=b_{j}$ for $i,j=1,2.$ Now, 
\begin{eqnarray*}
\sigma (f_if_i) &=&\sigma (f_i)\sigma
(f_i)=a_{i}a_{i} \\
\sigma (0) &=&0=a_{i}a_{i}\Rightarrow a_{1}=a_{2}=0.
\end{eqnarray*}%
In a similar manner, any basis vector should be mapped to $0.$ Hence, $
\sigma \equiv 0$, $\mathcal{W}$ does not have a nonzero character map. 
\end{proof}

\subsection{$\mathcal{W}$ is a dibaric algebra when $w=1=d$}

 \begin{thm}\label{dibaric} $\mathcal{W}$ is a dibaric algebra if and only if $w=1=d$.
\end{thm}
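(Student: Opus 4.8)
The plan is to prove both implications directly in terms of homomorphisms onto the sex differentiation algebra $\mathcal{U}=\langle W,M\rangle_{\mathbb{R}}$ of Definition \ref{Defn:sex}. Throughout I would use the identity $(\alpha W+\beta M)(\gamma W+\delta M)=\tfrac12(\alpha\delta+\beta\gamma)(W+M)$ in $\mathcal{U}$, which follows at once from $W^2=M^2=0$ and $WM=\tfrac12(W+M)$; in particular $(\alpha W+\beta M)^2=\alpha\beta(W+M)$, so the only square-zero directions in $\mathcal{U}$ are $\langle W\rangle$ and $\langle M\rangle$.

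For the easier ``if'' direction I would substitute $w=1=d$ into Equation (\ref{3.1}); the products collapse to $f_1m_1=\tfrac12(f_1+m_1)$, $f_1m_2=0$, and $f_2m_1=f_2m_2=\tfrac12(f_2+m_2)$. One checks that $\langle f_2,m_2\rangle_{\mathbb{R}}$ is an ideal of $\mathcal{W}$ and that the linear map $\varphi$ given by $\varphi(f_1)=W$, $\varphi(m_1)=M$, $\varphi(f_2)=\varphi(m_2)=0$ is a surjective algebra homomorphism onto $\mathcal{U}$: the only products requiring verification are those in (\ref{3.1}), and each matches the $\mathcal{U}$-multiplication above. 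Hence $\mathcal{W}$ is dibaric. Conceptually $\mathcal{W}/\langle f_2,m_2\rangle\cong\mathcal{U}$, the infected part becoming a null ideal while the uninfected pair $\{f_1,m_1\}$ behaves like a pure bisexual population.

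For the ``only if'' direction, suppose $\varphi:\mathcal{W}\to\mathcal{U}$ is a surjective homomorphism and write $\varphi(f_i)=a_iW+b_iM$, $\varphi(m_k)=c_kW+e_kM$. The heart of the argument is locating the images of the generators. Since $f_if_j=0$ and $m_km_l=0$, applying $\varphi$ and the multiplication identity forces $a_ib_j+a_jb_i=0$ and $c_ke_l+c_le_k=0$ for all indices; in particular each generator image squares to zero, hence lies on $\langle W\rangle\cup\langle M\rangle$. A short case check on the cross relation $a_1b_2+a_2b_1=0$ then shows the two female images lie on a common axis, and likewise the two male images; surjectivity rules out both sexes sharing a single axis, so after possibly interchanging $W$ and $M$ (an automorphism of $\mathcal{U}$, since its table is symmetric) we may assume $\varphi(f_i)=a_iW$ and $\varphi(m_k)=e_kM$. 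With the generators so placed, the conditions $\varphi(f_im_k)=\varphi(f_i)\varphi(m_k)$ read off from (\ref{3.1}) and Table \ref{tab:coef} become the system
$$P_{ik,1}a_1+P_{ik,2}a_2=a_ie_k=P_{ik,1}e_1+P_{ik,2}e_2,\qquad i,k\in\{1,2\}.$$
The pair $(1,1)$ gives $a_1=a_1e_1$ and $e_1=a_1e_1$, whence $a_1=e_1$ and $a_1^2=a_1$, so $a_1=e_1\in\{0,1\}$; the value $0$ collapses the image to one axis and destroys surjectivity, so $a_1=e_1=1$. Then $(1,2)$ yields $e_2=1-w$, and $(2,1)$ yields $a_2=1-dw$ together with $(1-d)\,dw=0$. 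Since $w,d\in(0,1]$ force $dw\neq0$, this gives $d=1$ and hence $a_2=1-w$; finally, with $d=1$ the pair $(2,2)$ reduces to $a_2w=0$, i.e. $(1-w)w=0$, so $w=1$. This is exactly $w=1=d$.

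I expect the main obstacle to be the ``only if'' direction, and specifically the step that pins down the images of the generators: one must argue from the square-zero relations together with surjectivity that females and males are forced onto \emph{opposite} null-axes of $\mathcal{U}$, ruling out the degenerate configurations in which some generator maps to $0$ or both sexes share an axis. Once that normal form is established the remaining system is a routine finite computation, and it is precisely the hypothesis $w,d\in(0,1]$ that converts the algebraic identities $(1-d)\,dw=0$ and $(1-w)w=0$ into the sharp conclusion $w=d=1$.
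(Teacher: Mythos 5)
Your argument is correct and follows essentially the same route as the paper's: force the female images onto one null-axis of $\mathcal{U}$ and the male images onto the other using the square-zero relations and surjectivity, then extract the same system of equations from the four products $f_im_k$ and deduce $d=1$, $w=1$ from $(1-d)dw=0$ and $(1-w)w=0$. The one imprecision is your dismissal of the sub-case $a_1=e_1=0$: by itself this does not ``collapse the image to one axis,'' since $\varphi(f_2)=a_2W$ and $\varphi(m_2)=e_2M$ could a priori still span $\mathcal{U}$; the case is instead killed by your own $(2,1)$ equations, which with $a_1=e_1=0$ give $da_2=0=de_2$ and hence $\varphi\equiv 0$ because $d\neq 0$ --- exactly how the paper handles it.
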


\begin{proof} Assume $\mathcal{U}$ is the sex differentiation algebra generated by $W$ and $M$. If $\mathcal{W}$ is a dibaric algebra, then there is an onto homomorphism $\varphi :\mathcal{W}\rightarrow \mathcal{U}$. 

Assume, $\varphi (f_{i})=a_{i}W + a_{i}'M$  and $\varphi (m_{j})=b_{j}W + b_{j}'M $ for $i,j=1,2$,\\
Then $\varphi (f_{i}f_i)=\varphi(0)= 0 = (a _{i}W + a_{i}'M)(a_{i}W + a_{i}'M) = (a_{i} a_{i}')(W + M)$ \\ implies $a_{i} a_{i}' =0$, either $a_{i} =0 $ or $a_{i}'=0$. 
A similar argument will show that either $b_{j} =0 $ and or $b_{j}'=0$. 
  
{\bf Claim:} When the images of $f_i$ are both non-zero, then either both $f_1,f_2$ are mapped to multiples of $W$ or to multiples of $M$. A similar claim holds for $m_j$.  

Assume on the contrary that $\varphi (f_1)= a_{1}W $ and $\varphi (f_2)= a_{2}'M$. 
Then $\varphi (f_1f_2)=\varphi(0)= 0 = (a_{1}W )( a_{2}'M) = \frac{1}{2}(a _{1} a_{2}')(W + M)$
and so $a_{1} a_{2}' = 0$.  Hence, $a_{1}=0$ or $a_{2}'=0$. This is a contradiction to the assumption of the claim. 

The images of $f_i$ are either both non-zero and mapped to multiples of $W$ or both non-zero and mapped to multiples of $M$. Thus, there are 18 possible maps as shown in Table \ref{tab:my_label2}.

\begin{table}[h!]
    \centering
    \caption{18 possible onto homomorphisms $\varphi$}
    \label{tab:my_label2}
    \vspace{.5cm}
        \begin{tabular}{c||ccccccccc}
    \hline \hline 
         $\varphi (f_{1})$   & $a_1'M$  &  $a_1'M$  & 0  & $a_1'M$  & $a_1'M$ & 0 & $a_1W$  & $a_1W$  & 0 \\      
         $\varphi (f_{2})$  & 0  & $a_2'M$  & $a_2'M$ & 0  & $a_2'M$ & $a_2'M$& 0 & $a_2W$  & $a_2W$ \\
        $\varphi (m_{1})$ &  $b_1W$ & $b_1W$ & $b_1W$  & $b_1W$ & $b_1W$ & $b_1W$& $b_1'M$ & $b_1'M$ &  $b_1'M$ \\
         $\varphi (m_{2})$ &  0& 0 &0  & $b_2W$  &$b_2W$ & $b_2W$& 0  & 0  & 0 \\
         \hline \hline 
         $\varphi (f_{1})$ & $a_1W$  & $a_1W$  & 0 & $a_1'M$   & $a_1'M$   & 0& $a_1W$  & $a_1W$  & 0\\
         $\varphi (f_{2})$ & 0 & $a_2W$  & $a_2W$ & 0   & $a_2'M$  & $a_2'M$ & 0 & $a_2W$  & $a_2W$\\
        $\varphi (m_{1})$ & $b_1'M$ & $b_1'M$ &  $b_1'M$ & 0 & 0 & 0  & 0 & 0 & 0  \\
         $\varphi (m_{2})$ & $b_2'M$  & $b_2'M$  & $b_2'M$ & $b_2W$ & $b_2W$ &$b_2W$ & $b_2'M$ & $b_2'M$ &$b_2'M$  \\
         \hline 
            \end{tabular}
   \end{table}
Without loss of generality, take $\varphi$ as, 
for $i,j=1,2$, $\varphi (f_{i})=a _{i}W~,~\varphi (m_{j})=b_{j}M$ where $a_i, b_j \in \mathbb{R}$.  
For 
\begin{eqnarray*}
z &=&(x,y)=x_{1}f_{1}+x_{2}f_{2}+y_{1}m_{1}+y_{2}m_{2}, \\
t &=&(u,v)=u_{1}f_{1}+u_{2}f_{2}+v_{1}m_{1}+v_{2}m_{2}
\end{eqnarray*}%
we obtain
\begin{eqnarray*}
zt
&=&u_{1}y_{1}f_{1}m_{1}+v_{1}x_{1}f_{1}m_{1}+u_{1}y_{2}f_{1}m_{2}+u_{2}y_{1}f_{2}m_{1}
\\
&&+x_{2}v_{1}f_{2}m_{1}+x_{1}v_{2}f_{1}m_{2}+u_{2}y_{2}f_{2}m_{2}+x_{2}v_{2}f_{2}m_{2}
\\
&=&(x_{1}v_{1}+u_{1}y_{1})\dfrac{1}{2}\left( f_{1}+m_{1}\right)
+(x_{1}v_{2}+u_{1}y_{2})\dfrac{1}{2}(1-w)\left( f_{1}+m_{1}\right) \\
&&+(x_{2}v_{1}+u_{2}y_{1})\dfrac{1}{2}\left[ (1-d)\left( f_{1}+m_{1}\right)
+d\left( f_{2}+m_{2}\right) \right] \\
&&+(x_{2}v_{2}+u_{2}y_{2})\dfrac{1}{2}\left[ (1-d)(1-w)\left(
f_{1}+m_{1}\right) +d\left( f_{2}+m_{2}\right) \right]
\end{eqnarray*}

\bigskip

\begin{eqnarray*}
zt &=&\dfrac{1}{2}\left\{ \left[
(x_{1}v_{1}+u_{1}y_{1})+(x_{1}v_{2}+u_{1}y_{2})(1-w)+(x_{2}v_{1}+u_{2}y_{1})(1-d)%
\begin{array}{c}
\text{ } \\ 
\text{ }%
\end{array}%
\right. 
\begin{array}{c}
\text{ } \\ 
\text{ }%
\end{array}%
\right. \\
&&\left. 
\begin{array}{c}
\text{ } \\ 
\text{ }%
\end{array}%
+(x_{2}v_{2}+u_{2}y_{2})(1-d)(1-w)\right] \left( f_{1}+m_{1}\right) \\
&&\left. 
\begin{array}{c}
\text{ } \\ 
\text{ }%
\end{array}%
+(x_{2}v_{1}+u_{2}y_{1}+x_{2}v_{2}+u_{2}y_{2})d\left( f_{2}+m_{2}\right)
\right\}
\end{eqnarray*}

\bigskip

\begin{eqnarray*}
\varphi (zt) &=&\dfrac{1}{2}\left\{
[(x_{1}v_{1}+u_{1}y_{1})+(x_{1}v_{2}+u_{1}y_{2})(1-w)+(u_{2}y_{1}+x_{2}v_{1})(1-d)%
\right.  \\
&&\left. +(u_{2}y_{2}+v_{2}x_{2})(1-d)(1-w)\right] \left( \alpha _{1}W+\beta
_{1}M\right)  \\
&&\left. +(u_{2}y_{1}+v_{1}x_{2}+u_{2}y_{2}+v_{2}x_{2})d\left( \alpha
_{2}W+\beta _{2}M\right) \right\} 
\end{eqnarray*}%
On the other hand 
\begin{eqnarray*}
\varphi (z)\varphi (t) &=&[(x_{1}a_{1}+x_{2}a_{2})W+(y_{1}b_{1}+y_{2}b_{2})M][(u_{1}a_{1}+u_{2}a_{2})W+(v_{1}b_{1}+v_{2}b_{2})M] \\
&=&\dfrac{1}{2}\{a _{1}b_{1}(x_{1}v_{1}+u_{1}y_{1})+a
_{1}b _{2}(x_{1}v_{2}+u_{1}y_{2}) \\
&&+a _{2}b _{1}(x_{2}v_{1}+u_{2}y_{1})+a _{2}b
_{2}(x_{2}v_{2}+u_{2}y_{2})\}(W+M)
\end{eqnarray*}%
Since $\varphi $ is a homomorphism $\varphi (zt)=\varphi (z)\varphi (t)$ \
and comparing coefficients of $W$ and $M$ we get

\begin{align}
a_{1}& =a _{1}b_{1}\text{ \ and \ }b_{1}=a_{1}b _{1} \label{eqn:1}\\
(1-d)a_{1}+da_{2}& =a_{2}b_{1}\text{ \ and \ }%
(1-d)  b_{1}+db_{2}=a _{2}b_{1} \label{eqn:2}\\
(1-w)a_{1}& =  a_{1}b_{2}\text{ \ and \ }(1-w)b_{1}=a_{1}b_{2} \label{eqn:3}\\
(1-d)(1-w)a_{1}+da_{2}& =a_{2}b_{2}\text{ \ and \ }%
(1-d)(1-w)b_{1}+db_{2}=a_{2}b_{2} \label{eqn:4}
\end{align}

By Equation (\ref{eqn:1}), $a_{1}=b_{1}=0$ \ or \ $a_{1}=b_{1}=1.$ 

Case 1: If $a_{1}=b_{1}=0,$ then by Equation (\ref{eqn:2}), either $d=0$ or $a_{2}=0=b_{2}$ However, if the maternal transmission rate is zero $d=0$, then the offspring population will not be infected anymore, which is not biologically meaningful. Hence, 
$a_{2}=b_{2}=0$ and $\varphi$ is the zero homomorphism. Contradiction to the assumption that $\mathcal{W}$ is dibaric. 

Case 2: If $a_{1}= b_{1}=1,$ then by Equation (\ref{eqn:2}),  
either $d=0$ or $a_2 = b_2$. As $d=0$ is not possible, take $a_2 = b_2$. By Equation (\ref{eqn:3}),
$a_{2}= b_{2}=1-w.$ Then from Equation (\ref{eqn:4}),  either $w=0$ or $w=1$. Again, $w=0$ is not considered, so $ a_{2}=b_{2}=0$. 
Now, plug the values into Equation (\ref{eqn:2}), to get $d-1 =0 $. Hence, $d=1$ also.
That is, there is a non-zero homomorphism $\varphi$ from $\mathcal{W}$ onto $\mathcal{{U}}$, such that  
$\varphi (f_1)= W$, $\varphi (m_1)= M$, $\varphi (f_2)= 0$, $\varphi (m_2)= 0$. 
(The other possible non-zero homomorphism  $\varphi$ maps the basis vectors as follows:  $\varphi (f_1)= M$, $\varphi (m_1)= W$, $\varphi (f_2)= 0$, $\varphi (m_2)= 0$.) 

Therefore, if the algebra $\mathcal{W}$ is dibaric then $w=1$ and $d=1$.

\bigskip
Conversely, assume $w=1=d$, and show that
$\mathcal{W}$, whose multiplication is given in Table \ref{tab:dibaric}, is a dibaric algebra.  

\begin{table}[h!]
\caption{The multiplication table of $\mathcal{W}$ when $w=1=d$.} 
\label{tab:dibaric} 
$
\begin{tabular}{c|c|c}
& $XY$ & $XY^{+}$ \\ \hline\hline
&   &   \\ 
$XX$ & $\frac{1}{2}XX+\frac{1}{2}XY$ & --- \\
&   &   \\ \hline
&   &   \\ 
$XX^{+}$ & $
\frac{1}{2}XX^{+} +\frac{1}{2}XY^{+}
$ & $
\begin{array}{c}
\frac{1}{2}XX^{+} +\frac{1}{2}XY^{+}
\end{array}
$
\end{tabular} $
\end{table}

Define $\varphi: \mathcal{W} \rightarrow  \mathcal{{U}}$ as $\varphi (f_1)= W$, $\varphi (m_1)= M$, $\varphi (f_2)= 0$, $\varphi (m_2)= 0$. A straightforward computation reveals that $\varphi$ is an onto homomorphism.    
\end{proof}
\subsection{$\mathcal{W}$ contains the sex differentiation algebra as a subalgebra}
As Table \ref{tab:zygote} reveals, the evolution algebra of a Wolbachia-infected bisexual population $\mathcal{W}$ contains a sex differentiation subalgebra $\mathcal{U} =  \langle  f_1, m_1 \rangle = \langle  XX, XY  \rangle$. However,  $\mathcal{U} \cdot \mathcal{W} \not \subseteq \mathcal{U}$, so the subalgebra $\mathcal{U}$ is not an ideal of $\mathcal{W}$. 

Sex differentiation algebra $\mathcal{U}$ is a dibaric algebra by definition. Moreover,  $\mathcal{U}^2 = \left\langle f_1+m_1 \right\rangle$ is a baric algebra which is an ideal of $\mathcal{U}$ and a subalgebra of $\mathcal{W}$.  

\section{Fixed points of Wolbachia-infected populations } \label{section:evolop}
This problem is also considered in the first paper of the sequel {\em An Algebraic Discussion of Bisexual Populations with Wolbachia Infection I: Discrete Dynamical System Approach,} \cite{I} via the discrete dynamical system approach. The fixed points are calculated and classified as stable and non-stable. Here, we use the evolutionary operator to arrive to the same conclusion.

\bigskip 
Consider the map $V : S^{1} \times S^{1} \rightarrow S^{1} \times  S^{1}$ where 
\begin{equation*}
S^{1} \times  S^{1} =\left\{ (x,y) \in \mathbb{R}^{2} \times \mathbb{R}^{2}~|~ (x_{1},x_{2}), (y_{1},y_{2})\in 
\mathbb{R}^{2}, \ x_{i}, y_{i} \geq 0,  \sum\limits_{i=1}^{2}x_{i}=1 =\sum\limits_{i=1}^{2}y_{i} \right\} 
\end{equation*}
with $z^{0}= (x_1^0, x_2^0, y_1^0, y_2^0) \longmapsto z^{\prime }=(x_1^{\prime }, x_2^{\prime }, y_1^{\prime }, y_2^{\prime })$.

In the evolution algebra of a bisexual population (EABP) set-up, inheritance coefficients satisfy the Equation (\ref{sumprobcoef=1}) and 
$z^{\prime }=(x_1^{\prime }, x_2^{\prime }, y_1^{\prime }, y_2^{\prime })$ is defined as 
\begin{equation*}
    x_j^{\prime } = \sum\limits_{i,k=1}^{2}P_{ik,j}^{(f)}x_iy_k  \quad   \quad
    y_j^{\prime } =\sum\limits_{i,k=1}^{2}P_{ik,j}^{(m)} x_iy_k  \quad \text{ for } j = 1,2.
\end{equation*}

However, in $\mathcal{W}$, there are deaths in the offspring, hence Equation (\ref{sumprobcoef=1}) is not satisfied. To overcome this flaw, normalize the offspring vector with respect to the $\ell^1$-norm.  
 Define  
\begin{equation}
    x_j^{\prime } = \dfrac{\sum\limits_{i,k=1}^{2}P_{ik,j}^{(f)}x_iy_k}{\sum\limits_{i,j,k=1}^{2}P_{ik,j}^{(f)}x_iy_k},   
    \quad  \quad y_j^{\prime } =\dfrac{\sum\limits_{i,k=1}^{2}P_{ik,j}^{(m)}x_iy_k}{\sum\limits_{i,j,k=1}^{2}P_{ik,j}^{(m)}x_iy_k} \quad
    \text{ for } j = 1,2.
\end{equation}  
By using Table \ref{tab:coef}, a direct computation shows that 
$$ \sum\limits_{i,j,k=1}^{2}P_{ik,j}^{(f)}x_iy_k  = 1- wy_2+ dw x_2y_2 =  \sum\limits_{i,j,k=1}^{2}P_{ik,j}^{(m)}x_iy_k .$$ 

This map $V$ is the evolutionary operator that describes the inheritance process of $\mathcal{W}$ from one generation (initial state vector) to the offspring (next generation's state vector). Namely, $V(z)= \dfrac{z \cdot z}{|| z\cdot z ||_1}$ where 
$|| z\cdot z ||_1 = 1- wy_2+dw x_2y_2$. 

Two important biological questions arise: 

Q1) For what value of $z$, $V(z) = z$? (The population stabilizes at this state vector.)

Q2) For what value of $z$, $V(z) = 0$? (The population dies if reaches this state vector.)

The element $z$ that solves Q1) is an idempotent, the element $z$ that solves Q2) is an absolute nilpotent element of the algebra $\mathcal{W}$. As both $d, w$ are parameters of $\mathcal{W}$, the solution should depend on the \emph{CI} paternal affection rate $w$ and maternal transmission rate $d$. 
\bigskip 
 
\subsection{Idempotent elements of $\mathcal{W}$}
An idempotent element of $\mathcal{W}$ is a fixed point of the operator $V$. Let $z =(x,y)=x_{1}f_{1}+x_{2}f_{2}+y_{1}m_{1}+y_{2}m_{2}$, be a fixed point of $V$. Then  $V(z)= \dfrac{z \cdot z}{|| z\cdot z ||_1} =z$, that is  
$ z \cdot z = (\sum\limits_{i,j,k=1}^{2}P_{ik,j}x_iy_k ) z =  (1- wy_2+ dw x_2y_2) z$. 
Now, 
\begin{align*}
z \cdot z  =  \left [ x_{1}y_{1}+x_{1}y_{2}(1-w)+x_{2}y_{1}(1-d) \right. &  +\left. x_{2}y_{2}(1-d)(1-w)\right ] 
\left( f_{1}+m_{1} \right)  \\ 
    & \ \ \ \ \  + \left [ (x_{2}y_{1}+x_{2}y_{2})d \right ]  \left( f_{2}+m_{2}\right) \ \ \ \ (\ast)
\end{align*}

and 
 $$  ({\sum\limits_{i,j,k=1}^{2}P_{ik,j}x_iy_k}) z = (1- wy_2+dw x_2y_2) (x_{1}f_{1}+x_{2}f_{2}+y_{1}m_{1}+y_{2}m_{2}). \ \ \ \  \  \ (\ast \ast) 
 $$

Note that the coefficients of $f_i$ and $m_i$ are equal in $z \cdot z$, so $x_{1} = y_{1}$ and $ x_{2} = y_{2}$.  

(Notice that this result agrees with the \cite[Proposition 3.1]{I}, even if the frequencies of initial Wolbachia infected male and female populations differ, in the next generation $f_2$ and $m_2$ frequencies become equal, furthermore, they are equal in the fixed point.)
 Also, use the fact that $x_1+x_2=1$ and simplify both $(\ast)$ and $( \ast \ast)$ using $x_2$ as the parameter: 
 $$  z \cdot z = \left[
(1-x_{2})^2+(1-x_{2})x_{2}(2-w-d)
+x_{2}^{2}(1-d)(1-w)\right] \left( f_{1}+m_{1}\right) +dx_{2}\left( f_{2}+m_{2}\right)  $$ 
$$(1- wx_2+dw x_2^2) z = (1- wx_2+dw x_2^2)(1-x_{2})(f_{1}+m_1)+ (1- wx_2+dw x_2^2) x_{2}(f_{2}+m_{2}).$$
 As the equations in $(\ast)$ and $( \ast \ast)$ are equal, the coefficients of the basis vectors are the same in both equations.  
 The coefficient of $f_1$ is
 $$ (1- wx_2+dw x_2^2) (1-x_2) = \left[
(1-x_{2})^2+(1-x_{2})x_{2}(2-w-d)
+x_{2}^{2}(1-d)(1-w) \right]$$ and 
the coefficient of $f_2$ is $$  (1- wx_2+dw x_2^2) x_2 =dx_{2}. $$
Simplify to get the same equation in both equalities: 
 $$ -x_{2}( dw(x_2)^2- wx_{2} + (1-d)) =0  \quad \mbox{ and } \quad dw(x_2)^2- wx_{2} + (1-d) =0. $$
 
 If $x_2 = 0$, then $x_1 =1$, hence the population is at a trivial fixed point, and there is no Wolbachia infection in the population. In this case, $ f_1+ m_1$ is an idempotent element.

 Also, there are two solutions to
 $dw(x_2)^2- wx_{2} + (1-d) =0$, namely \\ 
 $x_2= \frac{1}{2d}\left( 1+ \frac{\sqrt{w(4d^2-4d+w)}}{w} \right)$ and $x_2= \frac{1}{2d}\left( 1- \frac{\sqrt{w(4d^2-4d+w)}}{w} \right)$ provided that $0 < d(1-d) < w/4$. 

On the other hand, if $d=1$.\\

$x_2= \frac{1}{2}\left( 1 \pm  \frac{\sqrt{w(4-4+w)}}{w} \right)$ implies 
$x_2=0$ or $x_2=1$. Summarizing,

 \bigskip 
$x_2 \in \left\{ 
\begin{array}{ccc} 
\{ 0,1 \}  & \text{if} & d=1 \\
\text{ \ } \\
\left\{ \begin{array}{c}
\frac{1}{2d}\left( 1\pm \frac{\sqrt{w(4d^2-4d+w)}}{w} \right) 
\end{array} \right\}
 & \text{if} & 0 < d(1-d) < w/4
 
\end{array}
\right. $
\\

\bigskip

Hence, the idempotent element $z$ will be: 

\bigskip

Case 1. If $d=1$, then either 

a) $x_2=0$ implies $x_1=1$. Then $z=f_1 +m_1$. 

b) $x_2=1$ implies $x_1=0$. Then $z=f_2 +m_2$,

To verify,  
 a) $$ z \cdot z = (f_1 +m_1) (f_1 +m_1) =  2 \frac{1}{2d} (f_1 +m_1) = f_1 +m_1 = z$$ 
Here, $|| z\cdot z ||_1 = 1- wy_2+dw x_2y_2 = 1 -0-0 = 1$. 

b) $$z \cdot z = (f_2 +m_2) (f_2 +m_2) =  (1-w)(1-d) (f_1 +m_1) +  d (f_2 +m_2) = f_2 +m_2 = z$$ 
Here, $|| z\cdot z ||_1 = 1- wy_2+dw x_2y_2 = 1 - w +w = 1$. 

\bigskip 

Case 2. If $0 < d(1-d) < w/4$,  
\bigskip

 $x_2= \frac{1}{2d}\left( 1 \pm \frac{\sqrt{w(4d^2-4d+w)}}{w} \right)$  
  implies  $x_1= 1 -  \frac{1}{2d}\left( 1 \pm \frac{\sqrt{w(4d^2-4d+w)}}{w} \right)$. Then
$$z= \left( 1 -  \frac{1}{2d} \mp \frac{\sqrt{w(4d^2-4d+w)}}{2dw} \right) (f_1 +m_1) + \left( \frac{1}{2d} \pm \frac{\sqrt{w(4d^2-4d+w)}}{2dw} \right) (f_2 +m_2) $$

To verify:  

Here, a computation reveals that $|| z\cdot z ||_1 = 1- wy_2+dw x_2y_2 =1- wx_2+dw (x_2)^2$ 
$$  = 1-\frac{w}{2d}\left( 1 \pm \frac{\sqrt{w(4d^2-4d+w)}}{w} \right) + \frac{w}{4d}\left( 1\pm \frac{\sqrt{w(4d^2-4d+w)}}{w} \right)^2 = d. $$

$$z \cdot z = \left( \frac{2dw - w \mp \sqrt{w(4d^2-4d+w)}}{2w} \right) (f_1 +m_1) + \frac{1}{2d}\left( 1\pm  \frac{\sqrt{w(4d^2-4d+w)}}{w} \right)d(f_2 +m_2) $$

Note that,  $\dfrac{z \cdot z}{|| z\cdot z ||_1} = \dfrac{1}{d} (z \cdot z) = z $ as expected.  

\bigskip 

Summarizing the results, the set of idempotent elements of $\mathcal{W}$, denoted by $Idem(\mathcal{W})$ is given in Theorem \ref{idempotent}. 

\begin{thm} \label{idempotent} When $d=1$, $Idem(\mathcal{W})= \{ f_1+m_1, f_2+m_2\}$. \\
When  $0 < d(1-d) \leq w/4$,
$$Idem(\mathcal{W}) = \left\{ 
\begin{array}{c}
\left( 1 -  \frac{1}{2d} \mp \frac{\sqrt{w(4d^2-4d+w)}}{2dw} \right) (f_1 +m_1) + \left( \frac{1}{2d} \pm \frac{\sqrt{w(4d^2-4d+w)}}{2dw} \right) (f_2 +m_2)  
\end{array}
\right\}  $$
\end{thm}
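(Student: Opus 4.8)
The plan is to read off the idempotents directly from the fixed-point equation $V(z)=z$. Clearing the normalization, this is equivalent to $z\cdot z = \lambda z$ with $\lambda = \|z\cdot z\|_1 = 1 - wy_2 + dw\,x_2y_2$. Writing $z = x_1f_1+x_2f_2+y_1m_1+y_2m_2$ subject to $x_1+x_2 = 1 = y_1+y_2$, I would first invoke the structural observation already recorded above: in $z\cdot z$ the coefficient of $f_1$ equals that of $m_1$ and the coefficient of $f_2$ equals that of $m_2$. Comparing with $\lambda z$ (and using $\lambda\neq 0$, which holds whenever $z\cdot z\neq 0$) immediately forces $x_1=y_1$ and $x_2=y_2$. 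This collapses the four unknowns to the single parameter $x_2$, with $x_1 = 1-x_2$, and is the step that renders the whole computation manageable.

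Next I would substitute $y_i=x_i$ into $(\ast)$ and $(\ast\ast)$ and equate the coefficient of $f_1$ (equivalently $m_1$) with the coefficient of $f_2$ (equivalently $m_2$). The expectation, confirmed by a short expansion, is that both comparisons collapse to the \emph{same} scalar relation $x_2\,Q(x_2)=0$, where $Q(x_2) = dw\,x_2^2 - w\,x_2 + (1-d)$; indeed the $f_2$-comparison gives $\bigl(\lambda - d\bigr)x_2 = x_2 Q(x_2)=0$, and the $f_1$-comparison reduces to the negative of this. Hence the admissible values are $x_2=0$, yielding the infection-free fixed point $f_1+m_1$, together with the roots of the quadratic $Q$. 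The conclusion then follows from a case analysis on $d$: when $d=1$ the quadratic degenerates to $Q(x_2)=w\,x_2(x_2-1)$, whose roots $x_2=0$ and $x_2=1$ give the two idempotents $f_1+m_1$ and $f_2+m_2$; when $d\neq 1$ I would apply the quadratic formula to $Q$, obtaining $x_2 = \frac{1}{2d}\bigl(1 \pm \frac{\sqrt{w(4d^2-4d+w)}}{w}\bigr)$, and then translate each admissible $x_2$ back to $z = (1-x_2)(f_1+m_1) + x_2(f_2+m_2)$.

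Two points require care, and the second is where I expect the real work to lie. First, idempotency must be verified by direct substitution: the clean mechanism is that on any root of $Q$ one has $dw\,x_2^2 - w\,x_2 = d-1$, so the normalization simplifies to $\lambda = 1 - wx_2 + dw\,x_2^2 = d$, whence $\frac{1}{\lambda}(z\cdot z) = \frac1d(z\cdot z) = z$ as required (in the $d=1$ case one instead checks $\lambda=1$ for both elements). Second, and the main obstacle, is the range analysis: one must determine exactly when the quadratic has real roots lying in the probability interval $[0,1]$, which is governed by the discriminant $w\bigl(w-4d(1-d)\bigr)$ and leads precisely to the hypothesis $0 < d(1-d)\le w/4$. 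The boundary case of equality, producing the double root $x_2=\frac{1}{2d}$, should be checked separately to confirm that it still lies in the simplex.
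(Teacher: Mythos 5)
Your proposal follows essentially the same route as the paper: the symmetry of $z\cdot z$ forces $x_1=y_1$, $x_2=y_2$, both coefficient comparisons collapse to $x_2\,Q(x_2)=0$ with $Q(x_2)=dw\,x_2^2-w\,x_2+(1-d)$, and the observation that $\lambda=d$ on the roots of $Q$ is exactly the paper's verification step. Your closing remark that one must still check the roots lie in $[0,1]$ (the discriminant condition $0<d(1-d)\le w/4$ alone does not guarantee this) is a point the paper itself only handles implicitly through its later examples, so it is a fair caveat rather than a divergence in method.
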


\bigskip

Theorem \ref{idempotent} agrees with the fixed points calculated via the dynamical system approach in \cite[Section 3]{I}. For instance, 
\begin{enumerate}
    \item 
When $d=w=\frac{3}{4}$, the fixed point is $z=\frac{1}{3}(f_1+m_1) + \frac{2}{3} (f_2 +m_2)$ \cite[Case 2 of Section 3]{I}. 

\item 
When $w=1$, that is CI expression is 100\%, either, $$x_2= \frac{1}{2d}+ \frac{\sqrt{(4d^2-4d+1)}}{2d}= \frac{1 + 2d-1}{2d} = 1$$ 
The idempotent element is 
$z =0 (f_1+m_1) + 1 (f_2+m_2)$.

or the idempotent depends on $d \in (0,1]$ with 
$$x_2= \frac{1}{2d}- \frac{\sqrt{(4d^2-4d+1)}}{2d}= \frac{1 - 2d+1}{2d}=\frac{1 - d}{d}$$ 
provided that $\frac{1 - d}{d} \leq 1$, (i.e. $\frac{1}{2} \leq d$). The idempotent element is 
$z = \frac{2d-1}{d} (f_1+m_1) + \frac{1-d}{d} (f_2+m_2)$. \\

Also, take $w=1, d=1/2$. Then $z=f_2+m_2$ is the only idempotent element. 

Hence, for $w=1$, if the maternal transmission rate $d$ is greater than 50 \% , there are two fixed points of the population. Otherwise, when $0< d \leq \frac{1}{2}$, the only idempotent is $(f_2+m_2)$. \\

Consider a particular scenario: take a population with $w=1, d=\frac{2}{3}$. The inequality $d(1 - d)=\frac{2}{9} \leq \frac{w}{4}= \frac{1}{4}$ is satisfied and  $x_2= \frac{1 - d}{d} = \frac{1}{2} \leq 1$. Then $z=\frac{1}{2} (f_1+m_1) + \frac{1}{2} (f_2+m_2)$ is an idempotent.   
 
\end{enumerate}

\bigskip

\subsection{Absolute nilpotent elements of $\mathcal{W}$}
The element $z$ is called \textit{absolute nilpotent} if $z^2=0$. \\ Let $z =(x,y)=x_{1}f_{1}+x_{2}f_{2}+y_{1}m_{1}+y_{2}m_{2}$ be an absolute nilpotent element of $\mathcal{W}$, that is $V(z)= \dfrac{1}{|| z ||_1} (z \cdot z) = 0$ where 
$ || z ||_1 = \sum\limits_{i,j,k=1}^{2}P_{ik,j}x_iy_k  =  1- wy_2+dw x_2y_2 $.   

Hence, $$ 0 = z \cdot z = 
\left[ x_{1}y_{1}+x_{1}y_{2}(1-w)+x_{2}y_{1}(1-d)+x_{2}y_{2}(1-d)(1-w)\right]
\left( f_{1}+m_{1}\right) $$ 
$$+(x_{2}y_{1}+x_{2}y_{2})d\left( f_{2}+m_{2}\right).$$ 
Consequently, 
 $$ (I)  \quad x_{1}y_{1}+x_{1}y_{2}(1-w)+x_{2}y_{1}(1-d)
+x_{2}y_{2}(1-d)(1-w) =0 $$  and $$ (II) \quad (x_{2}y_{1}+x_{2}y_{2})d =0.$$
Take $(II)$, since $y_{1}+y_{2}=1$, and $d\neq 0$, $x_{2}=0$, and  $x_{1}=1$. 
Substituting $x_{2}=0$ and $x_{1}=1$ in $(I)$, gives $y_{1}+y_{2}(1-w)=0$. 
Then, $ 1 -wy_2=0$ implies $y_{2}=\dfrac{1}{w}$.
Note that $ w \leq 1$ implies $1/w \geq 1$ and $y_i \leq 1$ for $i=1,2$. Thus, $y_{2}=\dfrac{1}{w} = 1$ occurs when $w=1$. Consequently, $y_{1}=0$, and $z=f_1+m_2$ is an absolute nilpotent element of $\mathcal{W}$ with $w=1$. The set of all absolute nilpotent elements of $\mathcal{W}$ with $w=1$ will be the subalgebra generated by $\langle f_1 + m_2 \rangle$. 
If $w \neq 1$, there are no non-zero absolute nilpotent elements, i.e. the following theorem is proved: 
\begin{thm} \label{nilpotent}
For $\mathcal{W}$ with $\mathcal{CI}$ given as $w$ and maternal transmission rate $d$, the set of absolute nilpotent elements $Nil(\mathcal{W})$ is zero except when $w=1$. That is,  for any value of $d \in (0,1]$,  
$$Nil(\mathcal{W})= \left\{ 
\begin{array}{ccc}
\langle f_1 + m_2 \rangle  & \text{if} & w=1 \\ 
\text{ \ } &  &  \\ 
  \{ 0\}  & \text{if} & w\neq 1.
\end{array}
\right.$$ 
\end{thm}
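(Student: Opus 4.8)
The plan is to expand $z^2$ directly from the multiplication rules in Equation (\ref{3.1}) and then exploit the probability constraints on the coefficients. Writing $z = x_1 f_1 + x_2 f_2 + y_1 m_1 + y_2 m_2$, and using that $f_if_j = 0$ and $m_km_l = 0$, only the four cross terms $f_im_k$ survive, each of which is supported on the two vectors $f_1 + m_1$ and $f_2 + m_2$. Collecting coefficients I would obtain
$$ z^2 = \bigl[\,x_1 y_1 + x_1 y_2(1-w) + x_2 y_1(1-d) + x_2 y_2(1-d)(1-w)\,\bigr](f_1 + m_1) + d\,x_2(y_1 + y_2)(f_2 + m_2). $$
Since $f_1 + m_1$ and $f_2 + m_2$ are linearly independent in the natural basis $\mathcal{B}$, the equation $z^2 = 0$ forces each bracketed coefficient to vanish separately, which is exactly the pair of scalar equations $(I)$ and $(II)$.

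The next step is to feed the simplex constraints $x_1 + x_2 = 1$, $y_1 + y_2 = 1$ (with all entries nonnegative) into these equations. Equation $(II)$ becomes $d\,x_2(y_1+y_2) = d\,x_2 = 0$; since the maternal transmission rate satisfies $d \in (0,1]$, this immediately gives $x_2 = 0$ and hence $x_1 = 1$. Substituting $x_1 = 1$, $x_2 = 0$ into $(I)$ collapses it to $y_1 + (1-w)y_2 = 0$, and replacing $y_1 = 1 - y_2$ reduces this to $1 - w y_2 = 0$, that is $y_2 = 1/w$.

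Here lies the crux of the argument, and the only place where the biology genuinely intervenes. Because $w \in (0,1]$ we have $1/w \geq 1$, while the probability bound forces $y_2 \leq 1$; these are compatible only when $y_2 = 1$ and $w = 1$. Thus when $w \neq 1$ no admissible state vector satisfies $z^2 = 0$ and the only nilpotent element is $0$, whereas when $w = 1$ the forced values $x_1 = 1$, $x_2 = 0$, $y_1 = 0$, $y_2 = 1$ single out $z = f_1 + m_2$. To finish the $w=1$ case I would verify directly that $(f_1 + m_2)^2 = 2 f_1 m_2 = (1-w)(f_1 + m_1) = 0$, so that the whole line $\langle f_1 + m_2\rangle$ is absolutely nilpotent, which matches the asserted description of $Nil(\mathcal{W})$.

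I do not expect a serious technical obstacle here: the computation of $z^2$ is mechanical and the ensuing algebra is short. The single point requiring care is to keep the probability constraints active throughout rather than treating $z$ as an arbitrary vector; it is precisely the bound $y_2 \le 1$ (together with $w \le 1$) that forces $w=1$, so the normalization-to-the-simplex viewpoint coming from the evolutionary operator $V$ is what makes the statement meaningful. I would therefore state this restriction explicitly at the outset so that the forcing step in the third paragraph is fully justified.
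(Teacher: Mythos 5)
Your proposal is correct and follows essentially the same route as the paper: expand $z^2$ over the basis, extract the two scalar equations from the coefficients of $f_1+m_1$ and $f_2+m_2$, use $d\neq 0$ and the simplex constraints to force $x_2=0$, $x_1=1$, and then $y_2=1/w\le 1$ to conclude $w=1$. Your explicit verification that $(f_1+m_2)^2=(1-w)(f_1+m_1)=0$ when $w=1$ is a small welcome addition the paper leaves implicit.
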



\begin{thebibliography}{99}

\bibitem{isopod} Bech N, Beltran-Bech S, Chupeau C, Peccoud J, Thierry M, Raimond R, Caubet Y, Sicard M, and Gr\`{e}ve P. (2021)
 Experimental evidence of Wolbachia introgressive acquisition between terrestrial isopod subspecies,
\textit{Current Zoology}, 67(4), 455–464.

\bibitem{Matrix} Bustamante MD, Mellon P, Velasco MV. (2020) Determining when an Algebra
is an evolution algebra. \textit{Mathematics.} 8, 1349 

\bibitem{history} Ceballos M, Falcon RM, Nunez-Valdes J, Tenorio AF. (2022)
 A historical perspective of Tian’s evolution algebras, \textit{Expositiones Mathematicae} 40  819–843

\bibitem{EcoEvo} Engelstädter J and Hurst DDG. (2009)
 The Ecology and Evolution of Microbes that Manipulate Host Reproduction,
\textit{Annu. Rev. Ecol. Evol. Syst.} 40:127–49.

\bibitem{Fine} Fine P.E. (1978) On the dynamics of symbiote-dependent cytoplasmic incompatibility in culicine mosquitoes.
 \textit{Journal of Invertebrate Pathology} 31:10–18.

\bibitem{MatrixAnalysis} Horn RA, Johnson CR. (2013)
 Matrix Analysis, 2nd ed.; Cambridge University Press: Cambridge, UK.

\bibitem{EABP} Ladra M, Rozikov UA, (2013)
Evolution algebra of a bisexual population,  \textit{Journal of Algebra} Volume 378,  Pages 153-172. 

 
\bibitem{I} Özdinç B, Kanuni M, Esin S. An Algebraic Discussion of Bisexual Populations with Wolbachia Infection I: Discrete Dynamical System Approach,
\newblock{https://arxiv.org/abs/2302.06261, submitted}

\bibitem{WolbachBee} Pattabhiramaiah M, Brueckner D, Witzel KP, Junier P and Reddy MS.
 Prevalence of Wolbachia in the European Honeybee, Apis mellifera carnica,
\textit{World Applied Sciences Journal} 15 (11): 1503-1506, 2011.

\bibitem{Mos-Sing} The Project Wolbachia – Singapore Consortium, Ng Lee Ching, Wolbachia-mediated sterility suppresses Aedes aegypti populations in the urban tropics 
\newblock{https://www.medrxiv.org/content/10.1101/2021.06.16.21257922v1}.


\end{thebibliography}
\end{document}